\documentclass[a4paper,12pt]{article}

\usepackage{amssymb}
\usepackage{amsfonts,dsfont}
\usepackage{amsmath}
\usepackage{amsthm}
\usepackage{cite}
\usepackage{enumitem}
\numberwithin{equation}{section}
\usepackage{xcolor}

\newcommand{\R}{\mathbb{R}}

\newcommand{\N}{\mathbb{N}}

\newcommand{\norm}[1]{\|#1\|}

\newtheorem{definition}{Definition}[section]
\newtheorem{theorem}[definition]{Theorem}
\newtheorem{proposition}[definition]{Proposition}
\newtheorem{lemma}[definition]{Lemma}

\newtheorem{remark}[definition]{Remark}

\renewcommand{\phi}{\varphi}

\newcommand{\cE}{{\mathcal E}}

\newcommand{\cL}{{\mathcal L}}

\newcommand{\cS}{{\mathcal S}}

\newcommand{\bH}{{\mathbb H}}

\allowdisplaybreaks

\begin{document}

\begin{center}
{\bf \large Faber--Krahn inequality for the first Dirichlet  eigenvalues \\[2mm]
of the logarithmic Hardy operator }
\bigskip
\bigskip

{\small
Ying Wang\footnote{yingwang00@126.com}\qquad
Tao Zhang\footnote{zt11227@126.com}\qquad
Wanwan Wang\footnote{wwwang2014@yeah.net, $^{*}$Corresponding author.}$^{,*}$

\bigskip
\medskip

$^{1,2}$\,School of Information Management and Mathematics, Jiangxi University \\
of  Finance and Economics,  Nanchang, Jiangxi 330032,  China\\[10pt]

$^{3}$\,School of Mathematics and Information Science, Nanchang Hangkong University, \\
Jiangxi, Nanchang 330063,  China
}

\medskip

\begin{abstract}\small
The purpose of this paper is to study Faber--Krahn inequality  of
 Dirichlet eigenvalues $\{ \lambda^{\ln,\mu}_{k}(\Omega)\}_{k\in \N}$
 for the logarithmic Hardy operator
\[
\mathcal{L}_{\mu}=(-\Delta)^{\ln}+2\mu\log\frac{1}{|x|}
\]
on a bounded Lipschitz domain $\Omega\subset\mathbb{R}^{N}$ containing the origin, with $\mu>-1$.
We  establish a Faber--Krahn inequality
for $\mu\in(-1,0)$, the  ball centered at the origin minimizes the first eigenvalue at fixed volume,
with equality only for that ball. Finally, we show this inequality fails
for $\mu>0$ via an explicit two-ball-and-tube construction.

\end{abstract}
\end{center}
{\footnotesize

 \textbf{Keywords:}  Logarithmic  Hardy operator; Eigenvalue; Faber--Krahn inequality.

 \textbf{MSC2010:} 35P15;  35R09; 47G30.
}
\bigskip

\setcounter{equation}{0}

\section{Introduction}
Let $\Omega\subset \R^N$ be a  bounded Lipschitz domain  containing the origin with $N\geq 1$, $\mu> -1$
 and $\cL_\mu$  the logarithmic Hardy operator defined by
 $$\cL_\mu u=(-\Delta)^{\ln} u+2\mu\big (\log\frac1{|x|}\big) u,$$
where the operator $(-\Delta)^{\ln}$ is the logarithmic Laplacian with Fourier symbol $2\ln|\xi|$  and it
has  the expression
 \begin{align}\label{ln op-1}
  (-\Delta)^{\ln} u(x)&=c_N\,  {\rm p.v.} \int_{B_1(x)}\frac{u(x)-u(y)}{|x-y|^{N}}\,dy	
  - c_N\int_{\mathbb{R}^N\setminus B_1(0)}\frac{u(x+y)}{|y|^N}\,dy+\rho_N\,u(x),
\end{align}
for $u\in C_c^\beta(\R^N)$ and some $\beta>0$,
here $B_1(x)$ denotes the open unit ball in $\mathbb{R}^N$ centred at  $x$,
$$
	c_N=\pi^{-\frac N2}\Gamma(\frac N2),
	\qquad \
	\rho_N=2\ln 2+\psi\big(\frac N2\big)-\gamma,
$$
$\Gamma$ denotes the Gamma function,	 $\gamma=-\Gamma'(1)$ is the Euler--Mascheroni constant,
$\psi=\frac{\Gamma'}{\Gamma}$ is the Digamma function.

Over the past several decades, elliptic and parabolic equations involving the fractional Laplacian have been extensively studied,
and numerous novel nonlocal operators have emerged due to applications in diverse scientific and mathematical disciplines,
 see \cite{BV16,PK18,MR16,S24} and reference therein.  In   \cite{ChenWeth2019},  a new nonlocal operator named the logarithmic Laplacian $(-\Delta)^{\ln}$, defined in (\ref{ln op-1}),  is derived  by considering  the limit of the fractional Laplacian $(-\Delta)^s$ as $s\to0^+$. More precisely, for $u\in C^\beta_c(\mathbb{R}^n)$ with some $\beta>0$, the function $(-\Delta)^{\ln}u$ is uniquely defined by
	\begin{equation}\label{eq:def-log-Laplace}
		(-\Delta)^s u(x)=u(x)+ s\,(-\Delta)^{\ln}u(x)+o(s)\quad\text{in}\quad L^p(\mathbb{R}^n),\ 1<p\le \infty\quad \text{as }s\to 0^{+},
	\end{equation}
	or, equivalently,
	\[
	(-\Delta)^{\ln}u=\big.\frac{d}{ds}(-\Delta)^s u\big|_{s=0}\quad\text{in}\quad L^p(\mathbb{R}^n),\ 1<p\le \infty.
	\]
Subsequently, the logarithmic Laplacian has emerged as a prominent object of study due to its extremal properties.
 \cite{CHW23} establishes that the operator $(-\Delta)^{\ln}$ admits an extension property  fundamentally distinct from the Caffarelli-Silvestre extension associated with the fractional Laplacian (cf.~\cite{CS07}). Motivated particularly by order-dependent optimization problems involving small optimal orders, \cite{FJ22,AS23,HS22,DN21} study
  Dirichlet problems for $(-\Delta)^s$ in the asymptotic regime $s \to 0^+$.
  Applications of such problems arise, for instance, in image processing and population dynamics \cite{AB17,SV17,PV18}. Most recently, the logarithmic Laplacian has been extended to arbitrary complete Riemannian manifolds in \cite{C25,FS25}.
Particularly, qualitative properties for the Dirichlet eigenvalues have been investigated in \cite{LW,ChenVeron2023} and the references therein.

Recently,  the authors in \cite{LWY} have found  a sequence of  the eigenvalues
 $\{ \lambda^{\ln,\mu}_{k}(\Omega)\}_{k\in \N}$
of problem
\begin{equation}\label{eq: 1.1}
 \left\{
\begin{array}{lll}
\displaystyle  \cL_\mu u= \lambda u\quad
 & {\rm in}\  \   \Omega,
\\[2mm]
 \phantom{ -\ \,   }
 \displaystyle u=0 \quad
 & {\rm in}\  \, \R^N\setminus  \Omega,
 \end{array}
 \right.
 \end{equation}
satisfying
\[
-\infty< \lambda^{\ln,\mu}_{1}(\Omega)< \lambda^{\ln,\mu}_{2}(\Omega)\le \cdots\le \lambda^{\ln,\mu}_{k}(\Omega)\leq \cdots
\]
with corresponding eigenfunctions $\phi^{\ln,\mu}_{k}$, $k\in \N$ and
$\displaystyle \lim_{k\to \infty}\lambda^{\ln,\mu}_{k}(\Omega) = +\infty$.
 Our interest of this paper is to continue to study the properties of
 eigenvalues $\{ \lambda^{\ln,\mu}_{k}(\Omega)\}_{k\in \N}$.\smallskip

The first property is the monotonicity of the eigenvalues with respect to the domain.

 \begin{proposition}\label{pr 2-ineq}
Let \(N\ge 1\), \(\mu>-1\) and  \(\Omega_1,\Omega_2\subset\R^N\) be bounded
Lipschitz domains such that
\[
0\in \Omega_1\subset \Omega_2.
\]
For $i=1,2$, $\{ \lambda^{\ln,\mu}_{k}(\Omega_i)\}_{k\in \N}$ denote the sequence of  eigenvalues for problem
(\ref{eq: 1.1}) with $\Omega=\Omega_i$.

Then  for every \(k\in\N\),
\[
  \lambda_k^{\ln,\mu}(\Omega_1)\ge \lambda_k^{\ln,\mu}(\Omega_2).
\]
\end{proposition}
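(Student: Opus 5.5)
The approach is the Courant--Fischer min-max characterization of the eigenvalues $\lambda^{\ln,\mu}_k(\Omega)$, combined with the observation that the form domain for $\Omega_1$ embeds into the form domain for $\Omega_2$ by extension by zero. First I recall (from \cite{LWY}) the variational setting: the quadratic form associated with $\cL_\mu$ is
\[
\cE_\mu(u,u)=\cE_L(u,u)+2\mu\int_{\R^N}\log\frac1{|x|}\,u^2\,dx,
\]
where $\cE_L$ is the logarithmic Laplacian form
\[
\cE_L(u,u)=\frac{c_N}{2}\iint_{|x-y|<1}\frac{(u(x)-u(y))^2}{|x-y|^N}\,dx\,dy-c_N\iint_{|x-y|\ge1}\frac{u(x)u(y)}{|x-y|^N}\,dx\,dy+\rho_N\int u^2 .
\]
Its natural domain is $\bH(\Omega)=\{u\in \bH(\R^N): u=0 \text{ on } \R^N\setminus\Omega\}$. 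The eigenvalues are characterized by
\[
\lambda^{\ln,\mu}_k(\Omega)=\min_{\substack{V\subset \bH(\Omega)\\ \dim V=k}}\ \max_{u\in V\setminus\{0\}}\frac{\cE_\mu(u,u)}{\norm{u}_{L^2(\Omega)}^2}.
\]
If \cite{LWY} only gives the sequence via iterated minimization on orthogonal complements, I would first derive this min-max form by the standard argument. This uses the compactness of the embedding $\bH(\Omega)\hookrightarrow L^2(\Omega)$, self-adjointness, and the lower bound of $\cE_\mu$ on the unit $L^2$-sphere. For $\mu>-1$ the Hardy-type log term is controlled by $\cE_L$ via the logarithmic Hardy inequality, so the form is closed and semibounded.

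Second, I note that $\Omega_1\subset\Omega_2$ gives $\bH(\Omega_1)\subset \bH(\Omega_2)$. The condition $u\equiv0$ outside $\Omega_1$ implies $u\equiv 0$ outside $\Omega_2$. Moreover, $\cE_\mu(u,u)$ and $\norm{u}_{L^2}$ are defined by integrals over all of $\R^N$ and do not depend on which domain we regard $u$ as belonging to. This is the key point for a nonlocal operator: the form is domain-independent once $u$ is extended by zero, and no boundary terms arise.

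Third, I conclude. Every $k$-dimensional subspace $V\subset\bH(\Omega_1)$ is also an admissible $k$-dimensional subspace of $\bH(\Omega_2)$ with the same Rayleigh quotients. Minimizing over the larger family therefore gives $\lambda_k^{\ln,\mu}(\Omega_2)\le\lambda_k^{\ln,\mu}(\Omega_1)$. Concretely, I take $V=\mathrm{span}\{\phi^{\ln,\mu}_1,\dots,\phi^{\ln,\mu}_k\}$ for $\Omega_1$, for which the maximum of the quotient equals $\lambda_k^{\ln,\mu}(\Omega_1)$.

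The main obstacle is justifying the min-max formula rigorously in this space. The log weight $\log\frac1{|x|}$ is unbounded at the origin, and for $\mu<0$ it is negative. So one must check that $\cE_\mu$ is well defined and bounded below on $\bH(\Omega)$ and that the Rayleigh quotient is attained. I would obtain this from the estimates already used in \cite{LWY} to construct the eigenvalue sequence: coercivity of $\cE_\mu+C\norm{\cdot}_{L^2}^2$ together with compact embedding. After that, the min-max principle for semibounded self-adjoint operators with compact resolvent applies directly.
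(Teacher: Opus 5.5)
Your proposal is correct and follows the paper's own argument. Both use the min--max characterization, the observation that a function supported in $\Omega_1$ has the same Rayleigh quotient whether it is regarded in $\Omega_1$ or in $\Omega_2$, and minimization over the larger family of $k$-dimensional test subspaces. The only cosmetic difference is that you take the form domain to be the functions vanishing outside $\Omega$, which makes the inclusion $\bH(\Omega_1)\subset\bH(\Omega_2)$ immediate, whereas the paper works with the closure of $C_c^\infty(\Omega)$ and argues by density; for the Lipschitz domains considered here the two spaces coincide.
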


We next consider the Faber--Krahn inequality for the first Dirichlet eigenvalue  $\lambda^{\ln,\mu}_{1}$
of the operator $\cL_\mu$. It is well known that the Faber-Krahn inequality,
as one of  the most fundamental properties of the first Dirichlet eigenvalue  $\lambda^L_{1}$
of a differential operator $L$, states that, for any bounded domain
$\Omega \subset \mathbb{R}^N$ and any ball $B \subset \mathbb{R}^N$ with $|B| = |\Omega|$,
\[
\lambda^L_1(\Omega) \geq \lambda^L_1(B),
\]
with equality if and only if $\Omega$ is a ball (up to translation and null sets).
Equivalently, the scale-invariant quantity $|\Omega|^{\frac2N}\lambda^L_1(\Omega)$ satisfies
\[
|\Omega|^{\frac2N}\lambda^L_1(\Omega) \geq |B|^{\frac2N}\lambda^L_1(B),
\]
whose right-hand side depends only on dimension $N$. This inequality was first conjectured by
Faber \cite{Faber1923} and rigorously established by Krahn \cite{Krahn1925}.
Brasco--Philippis--Velichkov \cite{BPV2015} studied
the deficit $\lambda^L_1(\Omega) - \lambda^L_1(B)$ in terms of the Fraenkel asymmetry of $\Omega$.
 Extensions to the fractional Laplacian have also been developed, see \cite{F18-1,F20-1} and references therein.

The Faber--Krahn inequality of the first Dirichlet eigenvalue  $\lambda^{\ln,\mu}_{1}$
of  $\cL_\mu$ is stated as follows.

\begin{theorem}\label{teo 1.2}
Let  $N\geq 1$, $B_R(0)$ be the ball centered at the origin and $\lambda^{\ln,\mu}_{1}$
denote the first Dirichlet eigenvalue of $\cL_\mu$.

$(i)$ If $\mu\in(-1,0]$, then for any bounded Lipschitz domain $\Omega\subset\R^N$
with $0\in\Omega$ and $|\Omega|=|B_R(0)|$,
\begin{equation}\label{eq:FK-small}
  \lambda_1^{\ln,\mu}(\Omega)
  \;\geq\;
  \lambda_1^{\ln,\mu}(B_R(0)).
\end{equation}
Equality holds if and only if $\Omega=B_R(0)$.

$(ii)$ If $\mu>0$, then there exists a bounded Lipschitz domain $\Omega$ such that $0\in\Omega$,
$|\Omega|=|B_R(0)|$ and
\begin{equation}\label{eq:FK-fail}
  \lambda_1^{\ln,\mu}(\Omega)
  \;<\;
  \lambda_1^{\ln,\mu}(B_R(0)).
\end{equation}
\end{theorem}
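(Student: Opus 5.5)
Part $(i)$ will follow from the variational characterization
\[
\lambda_1^{\ln,\mu}(\Omega)=\inf\Big\{\cE_{\ln}(u,u)+2\mu\int_\Omega \log\tfrac1{|x|}\,u^2\,dx:\ u\in \bH(\Omega),\ \|u\|_{L^2}=1\Big\},
\]
where $\cE_{\ln}$ is the quadratic form of $(-\Delta)^{\ln}$:
\[
\cE_{\ln}(u,u)=\frac{c_N}{2}\iint_{|x-y|<1}\frac{(u(x)-u(y))^2}{|x-y|^N}\,dx\,dy-c_N\iint_{|x-y|\ge1}\frac{u(x)u(y)}{|x-y|^N}\,dx\,dy+\rho_N\int u^2 .
\]
I will take the first eigenfunction $\phi_1$ of $\Omega$. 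It can be chosen nonnegative because $\cE_{\ln}(|u|,|u|)\le\cE_{\ln}(u,u)$, and I will pass to its symmetric decreasing rearrangement $\phi_1^*$, which is supported in $\overline{B_R(0)}$ and has the same $L^2$ norm. Then I compare the three pieces of the energy separately.

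\emph{The logarithmic Laplacian part.} I will rewrite $\cE_{\ln}$ as
\[
\cE_{\ln}(u,u)=\frac{c_N}{2}\iint \frac{(u(x)-u(y))^2}{|x-y|^N}\,\mathds 1_{|x-y|<1}\,dx\,dy-c_N\iint u(x)u(y)\,\frac{\mathds 1_{|x-y|\ge1}}{|x-y|^N}\,dx\,dy+\rho_N\|u\|_2^2 .
\]
The kernel $\mathds 1_{|x-y|<1}|x-y|^{-N}$ is radially decreasing, so the Almgren--Lieb inequality makes the first term decrease under rearrangement. For the second term I use the Riesz rearrangement inequality: it gives $\iint u(x)u(y)K(x-y)\le \iint u^*(x)u^*(y)K(x-y)$ for symmetric decreasing $K$. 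The kernel $\mathds 1_{|x-y|\ge1}|x-y|^{-N}$ is not decreasing, so I will not apply Riesz to it directly. Instead I write it as $\min(1,|z|^{-N})-\mathds 1_{|z|<1}$. The term with $\min(1,|z|^{-N})$, which is decreasing, is handled by Riesz and enters with a minus sign, so it helps. The $\mathds 1_{|z|<1}$ part I recombine with the first term into the decreasing kernel $|z|^{-N}\mathds 1_{|z|<1}$ acting on differences, together with an $L^2$ term that is invariant. Equivalently, I will use the Chen--Weth representation of $\cE_{\ln}$ as a limit of $\frac{d}{ds}$ of fractional energies, each of which satisfies Pólya--Szegő. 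Either route gives $\cE_{\ln}(\phi_1^*,\phi_1^*)\le\cE_{\ln}(\phi_1,\phi_1)$.

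\emph{The potential part.} The weight $V(x)=\log\frac1{|x|}$ is radially decreasing. By the Hardy--Littlewood inequality, $\int V\phi_1^2\le\int V(\phi_1^*)^2$. Since $2\mu\le0$, this gives $2\mu\int V(\phi_1^*)^2\le 2\mu\int V\phi_1^2$. Hence $\lambda_1(B_R)\le\lambda_1(\Omega)$.

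\emph{Equality case.} If equality holds, every inequality above is an equality. The strict Riesz/Almgren--Lieb equality cases, with a strictly decreasing kernel such as $\min(1,|z|^{-N})$, force $\phi_1$ to be a translate of $\phi_1^*$. For $\mu<0$ the strict Hardy--Littlewood equality with the strictly decreasing $V$ pins the translation to $0$. For $\mu=0$ I would add an argument that the positivity set of $\phi_1$ is all of $\Omega$ (strong maximum principle), so $\Omega$ is a translate of $B_R$; since both $0\in\Omega$ and $\Omega=B_R(0)$ are required, this step is what needs care. This equality step, and the justification that $\phi_1^*\in\bH(B_R)$ and that the kernel decomposition is legitimate, is the main technical obstacle.

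Part $(ii)$ is by construction. Fix $r<R$ and a point $a$ far away, and let $\Omega_\varepsilon$ be the union of $B_{r}(0)$, a ball $B_{\rho}(a)$ and a thin tube of width $\varepsilon$ joining them. The radius $\rho$ is chosen so that $|\Omega_\varepsilon|=|B_R|$. The eigenvalue is bounded above by testing with the first eigenfunction $\psi$ of $B_\rho(a)$ with $\mu=0$, which gives
\[
\lambda_1(\Omega_\varepsilon)\le\lambda_1^{\ln,0}(B_\rho)+2\mu\int \log\tfrac1{|x|}\,\psi^2\approx \lambda_1^{\ln,0}(B_\rho)-2\mu\log|a| .
\]
Because $\mu>0$, this tends to $-\infty$ as $|a|\to\infty$, while $\rho\to(R^N-r^N)^{1/N}$ stays bounded away from zero. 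Meanwhile $\lambda_1(B_R(0))$ is a fixed finite number. Choosing $|a|$ large and then $\varepsilon$ small to adjust the volume yields $\Omega$ with $\lambda_1(\Omega)<\lambda_1(B_R(0))$.
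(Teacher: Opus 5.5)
The inequality part of $(i)$ is fine, but your equality case has two real problems.

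\textbf{The $\mu=0$ equality claim is false.} The step you flag for $\mu=0$ cannot be carried out. The operator $(-\Delta)^{\ln}$ commutes with translations, so for $0<|x_0|<R$ the ball $\Omega=B_R(x_0)$ is a Lipschitz domain containing $0$, with $|\Omega|=|B_R(0)|$ and $\lambda_1^{\ln,0}(\Omega)=\lambda_1^{\ln,0}(B_R(0))$, yet $\Omega\neq B_R(0)$. At $\mu=0$ only rigidity up to translation can hold. The paper gives no argument there; it cites the Chen--Weth and Chen--Chen--Hauer results. Its own introduction notes that for $\mu=0$ the minimizing ball is determined only up to translation. So the ``if and only if $\Omega=B_R(0)$'' is really a statement about $\mu\in(-1,0)$.

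\textbf{Your strictness mechanism fails as stated.} The kernel $\min\{1,|z|^{-N}\}$ is constant on $B_1$, hence not strictly decreasing, so Lieb's strict Riesz inequality does not apply. Concretely, if $\mathrm{diam}\,\Omega<1$ (so also $2R<1$ by the isodiametric inequality), this term equals $-c_N(\int u)^2$ for both $u=\phi_1$ and $u=\phi_1^*$ and carries no information. Translation rigidity at $\mu=0$ needs a genuinely strict P\'olya--Szeg\H{o} inequality for $(-\Delta)^{\ln}$. That requires a different splitting, for instance moving the cutoff radius beyond the diameters of the supports (this only changes the $L^2$ coefficient) and then comparing with a strictly decreasing kernel.

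\textbf{How to fix the case $\mu\in(-1,0)$.} Here you need none of this, and the fix is the paper's argument. The two parts of the energy are separately non-increasing under rearrangement. So equality of the totals forces
$\int\log\frac1{|x|}\,\phi_1^2\,dx=\int\log\frac1{|x|}\,(\phi_1^*)^2\,dx$, both finite by the Hardy inequality. The strict Hardy--Littlewood (bathtub) inequality for the strictly radially decreasing weight then gives $\phi_1=\phi_1^*$ a.e.\ directly, with no translation step; this is Lemma~\ref{lm 2.2} with $f(t)=t^2$. You also need $\phi_1>0$ a.e.\ in $\Omega$ in this case, not only for $\mu=0$. With it, $\Omega=\{\phi_1>0\}=\{\phi_1^*>0\}=B_R(0)$ up to a null set, hence $\Omega=B_R(0)$ since both are Lipschitz domains.

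\textbf{What is correct.} Your $s\to0^+$ route to P\'olya--Szeg\H{o} is exactly Lemma~\ref{lm 2.1}. Your kernel splitting is a valid direct alternative that avoids the limit in $s$, once the recombined kernel is written correctly. It is $(|z|^{-N}-1)_+$, not $|z|^{-N}\mathds 1_{|z|<1}$:
\[
\cE_{\ln}(u,u)=\frac{c_N}{2}\iint (u(x)-u(y))^2\big(|x-y|^{-N}-1\big)_+\,dx\,dy-c_N\iint u(x)u(y)\min\{1,|x-y|^{-N}\}\,dx\,dy+(\rho_N+c_N\omega_N)\|u\|_2^2 .
\]
Both kernels here are symmetric decreasing, so Almgren--Lieb and Riesz apply.

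Part $(ii)$ is the paper's construction. Testing with the $\mu=0$ eigenfunction of the far ball gives the required upper bound cleanly. Like the paper's construction, it needs $N\ge2$: in $N=1$ a domain containing $0$ with length $2R$ is an interval inside $(-2R,2R)$ and cannot reach far from the origin.
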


\smallskip

When $\mu=0$, for the pure logarithmic Laplacian
$\cL_0=(-\Delta)^{\ln}$, the Faber--Krahn inequality
\eqref{eq:FK-small} holds for all volumes, as established by
Chen--Weth \cite{ChenWeth2019}
via the relationship
\[
\lambda_1^{\ln}(\Omega)
=
\lim_{s\to0^+}
\frac{\lambda_{1,s}(\Omega)-1}{s},
\]
together with the fractional Faber--Krahn inequality, where $\lambda_1^{\ln}$ is the first Dirichlet eigenvalue of
$(-\Delta)^{\ln}$, $\lambda_{1,s}$ is the first Dirichlet eigenvalue of $(-\Delta)^s$.
While   the rigidity of  Faber--Krahn inequality for $\mu=0$
is built by Chen--Chen--Hauer \cite{CCH26} recently.  This phenomena  is
in sharp contrast with the fractional logarithmic Laplacian \((-\Delta)^{s+\ln}\),
whose Fourier symbol is \(|\xi|^{2s}\ln|\xi|^2\) and for which the
Faber--Krahn inequality holds for small volumes but fails for
sufficiently large volumes when \(N\geq 2\), there exist anisotropic boxes
\(Q\) with \(|Q|=|B_R(0)|\) such that
\(\lambda_1^{s+\ln}(Q)<\lambda_1^{s+\ln}(B_R(0))\).
Theorem~\ref{teo 1.2} $(i)$ shows that the same
conclusion persists for every $\mu\in(-1,0)$, without any small-volume
restriction.  However,  for $\mu\in(-1,0)$, the minimizing ball must be centered at the origin,
while, it could be invariant up to the translatation when $\mu=0$.

 We would like to mention  that   that the operator
\((-\Delta)^{s+\ln}\) is a Fourier multiplier, and its symbol
\(|\xi|^{2s}\ln|\xi|^2\) attains its minimum on a sphere of codimension
\(1\), which allows anisotropic domains to concentrate the Fourier mass
near that sphere and beat the ball at large volumes. For the logarithmic
Hardy operator with \(\mu\in(-1,0)\) no such phenomenon occurs, since $\cL_\mu$
is not a Fourier multiplier for $\mu\neq0$
(see Proposition~\ref{lem:fourier-hardy}), and the potential
\(W_\mu(x)=2\mu\log\frac1{|x|}\) is radially increasing, so the
symmetric decreasing rearrangement decreases the potential energy.
Combined with the P\'olya--Szeg\H{o} inequality for \((-\Delta)^{\ln}\),
the full quadratic form decreases under symmetrization, and the ball
centered at the origin is optimal at every volume.

Note that the first eigenvalue still becomes negative for large domains. In fact, by the scaling property
\(\lambda_1^{\ln,\mu}(r\Omega)=\lambda_1^{\ln,\mu}(\Omega)-2(1+\mu)\log r\)
with \(1+\mu>0\), one has \(\lambda_1^{\ln,\mu}(r\Omega)\to-\infty\) as \(r\to+\infty\)
and \(\lambda_1^{\ln,\mu}(r\Omega)\to+\infty\) as \(r\to0^+\). What the
Faber--Krahn inequality controls is the  difference
\(\lambda_1^{\ln,\mu}(\Omega)-\lambda_1^{\ln,\mu}(B_R(0))\), which is
dilation-invariant by the same scaling property and is always nonnegative by Theorem~\ref{teo 1.2}.

In contrast,    Theorem~\ref{teo 1.2} $(ii)$ shows that for $\mu>0$, the
Faber--Krahn inequality \eqref{eq:FK-small} fails. In this case we just construct a particular domain
whose first eigenvalue is less than the one in a ball centered at origin with the same volume.
Here we construct an explicit two-ball-and-tube domain satisfying  (\ref{eq:FK-fail}).


\smallskip

The paper is organized as follows.  Section~2 is devoted to give the proof of Proposition \ref{pr 2-ineq} and shows that  $\mathcal{L}_\mu$ is not a Fourier multiplier for $\mu\neq0$. Section~3  proves the Faber--Krahn inequality
 for $\mu\in(-1,0)$ and its failure for $\mu>0$ in Theorem~\ref{teo 1.2}.

\section{Preliminary}
\subsection{Basic properties}

Let $C_c^\infty(\Omega)$ be the functions in $C^\infty(\R^N)$ with the support in $\Omega$ and
 $$\bH^{\ln}_0 (\Omega):= \overline{C_c^\infty(\Omega)}^{\|\cdot\|_{\ln }},$$
 where
$$\|u\|_{\ln}=\Big(c_N\int\!\!\!\!\int_{\{x,y\in\R^N: \, |x-y|<1\}}\frac{\big(u(x)-u(y)\big)^2}{|x-y|^{N}}\,dxdy +\int_{\Omega} u(x)^2\,dx\Big)^{\frac12}  \, .$$
Note that $\bH^{\ln}_0 (\Omega)$ is a Hilbert space with the inner product
$$ \langle u,v\rangle_{\ln}=c_N
\int\!\!\!\!\int_{\{x,y\in\R^N: \, |x-y|<1\}}\frac{\big(u(x)-u(y)\big)\big(v(x)-v(y)\big) }{|x-y|^{N}}\, dxdy
+\int_{\Omega} u(x)v(x)\,dx. $$

We first recall the following Hardy type inequality.
\begin{proposition}\cite[Proposition 1.1]{LWY}\label{lem:domain-hardy}
Let  $  \Omega\subset \R^N$ be a  bounded Lipschitz domain with $N\geq 1$ containing the origin.
For every $u\in C_c^\infty(\Omega)$, then 
\begin{equation}\label{eq 01}
2\int_\Omega\big(\log\frac1{|x|}\big)\,u(x)^2\,dx+ d_{N,\Omega}   \norm{u}_{2}^2\leq \|u\|_{\ln}^2\,,
\end{equation}
where
$$d_{N,\Omega}= 2\psi\!\big(\frac N4 \big)-2\psi\!\big(\frac N2 \big) +1-\gamma-C_N |\Omega|^{\frac12},\qquad \ C_N=c_N(\int_{\R^N\setminus B_1(0)} |y|^{-2N}dy)^{\frac12}\, .$$
\end{proposition}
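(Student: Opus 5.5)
The plan is to compare the quadratic form of the logarithmic Laplacian with two things: its Fourier representation, to which a sharp logarithmic uncertainty principle applies, and its kernel representation, which is controlled by $\|u\|_{\ln}^2$ up to a long-range error. For $u\in C_c^\infty(\Omega)$, the Chen--Weth formula for the form of \eqref{ln op-1} gives
\[
\cE_L(u,u):=\int_{\R^N}2\log|\xi|\,|\hat u(\xi)|^2\,d\xi
=\frac{c_N}{2}\iint_{|x-y|<1}\frac{(u(x)-u(y))^2}{|x-y|^N}\,dxdy
-c_N\iint_{|x-y|\ge1}\frac{u(x)u(y)}{|x-y|^N}\,dxdy+\rho_N\|u\|_2^2 ,
\]
with $\hat u$ unitarily normalized. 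I will first rederive this carefully, in particular the factor in front of the short-range integral (the paper's norm carries $c_N$, not $c_N/2$); any slack there only helps, since that integral is nonnegative.

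The key analytic input is Beckner's logarithmic uncertainty principle, i.e.\ the $s\to0$ endpoint of Pitt's inequality:
\[
\int_{\R^N}\log|x|\,|u|^2\,dx+\int_{\R^N}\log|\xi|\,|\hat u|^2\,d\xi\ \ge\ \Big(\psi\big(\tfrac N4\big)+\log 2\Big)\|u\|_2^2 ,
\]
with the constant adjusted to the chosen Fourier normalization. Rearranging gives
\[
2\int_\Omega\log\frac1{|x|}\,u^2\,dx\ \le\ \cE_L(u,u)-2\Big(\psi\big(\tfrac N4\big)+\log2\Big)\|u\|_2^2 .
\]
I will then substitute the kernel expression for $\cE_L(u,u)$, bound the short-range part by $c_N\iint_{|x-y|<1}\cdots\le\|u\|_{\ln}^2-\|u\|_2^2$, and combine constants. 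Using $\rho_N=2\log2+\psi(N/2)-\gamma$, the remaining constant should reduce to the stated $2\psi(N/4)-2\psi(N/2)+1-\gamma$. Matching this exactly (the normalization of $\hat u$, the $\log2$ and $2\pi$ terms, the sign of $\psi(N/2)$) is the bookkeeping step I expect to be the main obstacle. If Beckner's constant does not line up directly, I would instead pass through the fractional Pitt inequality $\int|\xi|^{2s}|\hat u|^2\ge C_{N,s}\int|x|^{-2s}u^2$ and differentiate at $s=0$, where equality holds, recovering the constant from $\frac{d}{ds}\log C_{N,s}\big|_{s=0}$ expressed through $\psi(N/4)$ and $\psi(N/2)$.

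Finally, the long-range term produces the $|\Omega|^{1/2}$ correction. Set $k(y)=|y|^{-N}\mathds 1_{\{|y|\ge1\}}\in L^2(\R^N)$. Cauchy--Schwarz in $y$ gives $\sup_x|u*k(x)|\le\|u\|_2\|k\|_2$, and since $u$ is supported in $\Omega$, also $\|u\|_1\le|\Omega|^{1/2}\|u\|_2$. Therefore
\[
c_N\Big|\iint_{|x-y|\ge1}\frac{u(x)u(y)}{|x-y|^N}\,dxdy\Big|\le c_N\|u\|_1\|u*k\|_\infty\le C_N|\Omega|^{1/2}\|u\|_2^2 .
\]
Inserting this bound and collecting the $\|u\|_2^2$ terms yields \eqref{eq 01} with the stated $d_{N,\Omega}$.
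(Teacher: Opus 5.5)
\textbf{The constant does not come out as you predict, and the case $N=1$ is left unproved.} Your ingredients are stated correctly. With the unitary Fourier transform, Beckner's logarithmic uncertainty constant is $\psi(\frac N4)+\log 2$. The Chen--Weth form carries $\frac{c_N}{2}$ on the short-range integral. Your Cauchy--Schwarz bound on the long-range integral gives exactly $C_N|\Omega|^{1/2}$. (The paper gives no proof of this proposition; it only quotes \cite[Proposition 1.1]{LWY}.)

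The gap is in the bookkeeping you deferred. With $\rho_N=2\log2+\psi(\frac N2)-\gamma$ your chain gives
\[
2\int_\Omega\log\frac1{|x|}\,u^2\,dx+\Big(2\psi\big(\tfrac N4\big)-\psi\big(\tfrac N2\big)+\gamma\Big)\|u\|_2^2\le\frac{c_N}{2}\iint_{|x-y|<1}\frac{(u(x)-u(y))^2}{|x-y|^N}\,dxdy-c_N\iint_{|x-y|\ge1}\frac{u(x)u(y)}{|x-y|^N}\,dxdy .
\]
This yields \eqref{eq 01} with $\tilde d=2\psi(\frac N4)-\psi(\frac N2)+1+\gamma-C_N|\Omega|^{1/2}$ in place of $d_{N,\Omega}$, and $\tilde d-d_{N,\Omega}=\psi(\frac N2)+2\gamma$.
\begin{itemize}
\item For $N\ge2$ this difference is $\ge\gamma>0$, since $\psi(1)=-\gamma$ and $\psi$ is increasing. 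There you prove the statement with room to spare.
\item For $N=1$, $\psi(\frac12)=-\gamma-2\log2$ gives $\tilde d-d_{1,\Omega}=\gamma-2\log2\approx-0.81$. You obtain a strictly weaker inequality, so this case is not proved.
\end{itemize}
Your fallback cannot repair this. Differentiating Pitt's inequality at $s=0$ reproduces exactly Beckner's sharp constant. The choice of normalization is also irrelevant, because the final inequality involves no Fourier transform.

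\textbf{What is needed for $N=1$.} Here $c_1=1$, $C_1=\sqrt2$, and $\Omega=(a,b)$ is an interval. Write $A$ and $B$ for the short- and long-range double integrals above. The missing estimate is
\[
-B\le\tfrac12A+\big(\sqrt{2|\Omega|}-(2\log2-\gamma)\big)\|u\|_2^2 ,
\]
and the half of $A$ you discarded is the natural resource.
\begin{itemize}
\item If $|\Omega|\le1$, then $B=0$. Keeping only pairs with exactly one point in $\Omega$ gives $A\ge2\int_\Omega u(x)^2\log\frac1{(x-a)(b-x)}\,dx\ge4\log2\,\|u\|_2^2$, which suffices.
\item For larger $|\Omega|$ this lower bound on $A$ degenerates, and the Cauchy--Schwarz bound on $B$ no longer suffices. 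You need a sharper bound on $B$. When $|\Omega|$ is close to $1$, Young's inequality gives $|B|\le2\log|\Omega|\,\|u\|_2^2$. Otherwise use the Fourier representation $-B=2\int_{\R}\mathrm{Ci}(|\xi|)\,|\hat u(\xi)|^2\,d\xi\le2\,\mathrm{Ci}(\frac\pi2)\,\|u\|_2^2\approx0.944\,\|u\|_2^2$, where $\mathrm{Ci}(s)=-\int_s^\infty\frac{\cos t}{t}\,dt$.
\end{itemize}
These two bounds close the gap, though with little room to spare. As written, your proposal establishes the proposition only for $N\ge2$.
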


For $\mu>-1$ and $u,v\in C^{\infty}_c(\Omega)$, let us denote
\begin{equation}\label{eq 02}
\langle u,v\rangle_{\ln,\mu} = \langle u,v\rangle_{\ln} +2\mu \int_{\Omega}  \big(\log\frac1{|x|} \big)_+u(x) v(x)\,dx+ (d_{N,\Omega})_+ \int_{\Omega}  u(x) v(x)\,dx,
\end{equation}
where  $a_\pm=\max\{0,\pm a\}$. The corresponding induced norm
\begin{equation}\label{eq 03}
\|u\|_{\ln,\mu}=\sqrt{ \langle u,u\rangle_{\ln,\mu} }\,.
\end{equation}
By the Hardy type inequality (\ref{eq 01}), it shows that
\[
\bH_0^{\ln,\mu}(\Omega)
:=
\overline{C_c^\infty(\Omega)}^{\|\cdot\|_{\ln,\mu}}
\]
 is a Hilbert space with the inner product (\ref{eq 02}).

\begin{proposition}\cite[Theorem 1.2]{LWY}
	\label{prop:energy-space}
	  Let  $  \Omega\subset \R^N$ be a  bounded Lipschitz domain with $N\geq 1$ containing the origin.
Then
		\begin{equation}\label{eq:space-identification}
			\bH_0^{\ln,\mu}(\Omega)\cong \bH_0^{\ln}(\Omega)\qquad {\rm for}\ \ \mu>-1,
			\end{equation}
	and the embedding
		\begin{equation}\label{eq:perturbed-compact}
			\mathbb H_0^{\ln,\mu}(\Omega)\hookrightarrow L^2(\Omega)
		\end{equation}
		is continuous for $\mu\geq -1$
		and compact for $\mu>-1$ and non-compact for $\mu=-1$.
\end{proposition}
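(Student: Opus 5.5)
The plan is to compare the two quadratic forms on $C_c^\infty(\Omega)$ by means of the Hardy inequality \eqref{eq 01}, and then pass to closures. Fix $u\in C_c^\infty(\Omega)$ and write $\ell(x)=\log\frac1{|x|}$. Since $\Omega$ is bounded, $\ell_-\le \log_+ {\rm diam}(\Omega\cup\{0\})=:L_\Omega$ on $\Omega$. Rewriting \eqref{eq 01} with $\ell=\ell_+-\ell_-$ gives
\begin{equation*}
2\int_\Omega \ell_+ u^2\,dx\le \|u\|_{\ln}^2-(d_{N,\Omega}-2L_\Omega)\|u\|_2^2 .
\end{equation*}
This single estimate drives both directions of the norm equivalence.

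\textbf{Upper bound (all $\mu>-1$).} If $\mu\le 0$, trivially $\|u\|_{\ln,\mu}^2\le \|u\|_{\ln}^2+(d_{N,\Omega})_+\|u\|_2^2$. If $\mu>0$, the displayed inequality bounds the potential term, so
\begin{equation*}
\|u\|_{\ln,\mu}^2\le (1+\mu)\|u\|_{\ln}^2+C\|u\|_2^2 .
\end{equation*}
In both cases $\|u\|_{\ln,\mu}\le C'\|u\|_{\ln}$, because $\|u\|_{\ln}$ already contains $\|u\|_2$.

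\textbf{Lower bound ($\mu\in(-1,0)$).} Using the displayed inequality multiplied by $|\mu|$, we get
\begin{equation*}
\|u\|_{\ln,\mu}^2\ge (1-|\mu|)\|u\|_{\ln}^2+\big(|\mu|(d_{N,\Omega}-2L_\Omega)+(d_{N,\Omega})_+\big)\|u\|_2^2 .
\end{equation*}
The first term alone controls $(1+\mu)\|u\|_{\ln}^2$. However, the $L^2$ coefficient may be negative, so we cannot simply discard it. The plan is to absorb it by interpolation: split $\|u\|_{\ln,\mu}^2=\theta\|u\|_{\ln,\mu}^2+(1-\theta)\|u\|_{\ln,\mu}^2$ and use the continuous embedding of the case $\mu=-1$ (or a sharper Hardy constant) to dominate the negative $L^2$ piece. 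The two bounds together give equivalent norms on $C_c^\infty(\Omega)$. Hence the completions coincide, which is \eqref{eq:space-identification}.

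\textbf{Embedding.} For $\mu>-1$, compactness of \eqref{eq:perturbed-compact} follows from the identification combined with the known compact embedding $\bH_0^{\ln}(\Omega)\hookrightarrow L^2(\Omega)$ of Chen--Weth. For $\mu=-1$, continuity needs $\|u\|_2^2\le C\|u\|_{\ln,-1}^2$. This is exactly where the $L^2$ constant above can be negative, and I expect it to be the main obstacle. I would first try to prove it by contradiction using the scaling behaviour of the form, $\|u(\cdot/r)\|$ changing by $-2\log r\,\|u\|_2^2$ in both the kinetic and potential parts. The aim is to show that a normalized sequence with $\|u_k\|_{\ln,-1}\to0$ would violate \eqref{eq 01} on a slightly larger ball.

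\textbf{Non-compactness at $\mu=-1$.} Take a fixed $\varphi\in C_c^\infty(B_1(0))$ with $\|\varphi\|_2=1$ and set $u_k(x)=k^{N/2}\varphi(kx)$, which is supported in $\Omega$ for large $k$. A direct change of variables in the kernel restricted to $|x-y|<1$ shows that the kinetic part of $\|u_k\|_{\ln}^2$ grows like $2\log k+O(1)$. The potential term $2\int \ell_+u_k^2$ also equals $2\log k+O(1)$. These cancel, so $\|u_k\|_{\ln,-1}$ stays bounded. Meanwhile $u_k\rightharpoonup0$ in $L^2$ while $\|u_k\|_2=1$, so no subsequence converges strongly and the embedding is not compact.
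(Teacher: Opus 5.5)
The paper gives no proof of this proposition; it is quoted from \cite[Theorem 1.2]{LWY}, so your argument has to stand on its own. Your upper bound is fine, and so is your G{\aa}rding-type estimate $\|u\|_{\ln,\mu}^2\ge(1+\mu)\|u\|_{\ln}^2+\big(|\mu|(d_{N,\Omega}-2L_\Omega)+(d_{N,\Omega})_+\big)\|u\|_2^2$. The deduction of compactness for $\mu>-1$ from the identification is also fine.

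The genuine gap is the coercivity $\|u\|_2^2\le C\|u\|_{\ln,-1}^2$ on $C_c^\infty(\Omega)$. You leave it open, yet both your lower bound for $\mu$ near $-1$ and the continuity at $\mu=-1$ rest on it. It cannot come from \eqref{eq 01}. By the duplication formula $\psi(\frac N2)-\psi(\frac N4)>\ln 2$, and $1-\gamma<2\ln 2$, so $d_{N,\Omega}<0$ for every $N$ and $\Omega$. Hence $(d_{N,\Omega})_+=0$ and your $L^2$ coefficient at $\mu=-1$ is $d_{N,\Omega}-2L_\Omega<0$. Using $\|u\|_{\ln}\ge\|u\|_2$ only rescues $|\mu|<(1-d_{N,\Omega}+2L_\Omega)^{-1}$.

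Scaling plus contradiction cannot supply it either. Suppose $u$ is supported in $B_\delta(0)\subset\Omega$ with $\delta\le\frac12$. Then $\log_+|x|=0$ on the support and nothing interacts across $|x-y|\ge1$. With the Chen--Weth factor $\frac{c_N}{2}$ in front of the double integral, this gives $\|u\|_{\ln,-1}^2=\int u(-\Delta)^{\ln}u\,dx-2\int\log\frac1{|x|}\,u^2\,dx+(1-\rho_N)\|u\|_2^2$. The first two terms are exactly invariant under $u\mapsto k^{N/2}u(k\,\cdot)$, $k\ge1$. So rescaling gains nothing. A normalized sequence approaching the infimum may concentrate at $0$ and converge weakly to $0$ (your own non-compactness mechanism), leaving no limit to contradict.

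What is needed instead is a sharp constant. By this invariance, the infimum of $\|u\|_{\ln,-1}^2/\|u\|_2^2$ over functions concentrated at the origin is $2(\psi(\frac N4)+\ln 2)+1-\rho_N=2\psi(\frac N4)-\psi(\frac N2)+1+\gamma$. Here $2(\psi(\frac N4)+\ln 2)$ is the best constant in Beckner's logarithmic uncertainty (endpoint Pitt) inequality $\int 2\log|\xi|\,|\widehat u(\xi)|^2\,d\xi\ge 2\int\log\frac1{|x|}\,u^2\,dx+2(\psi(\frac N4)+\ln 2)\|u\|_2^2$. Coercivity needs this number to be positive, plus a localization argument away from the origin.

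Be aware that this number is negative for $N\le 3$ and zero for $N=4$ (about $-4.9$ for $N=1$). So in this normalization, continuity at $\mu=-1$ actually fails in those dimensions. With the factor $c_N$ printed in $\|\cdot\|_{\ln}$, the other half breaks instead. The kinetic part of $\|u_k\|_{\ln}^2$ in your non-compactness step is then $2c_N|\mathbb S^{N-1}|\log k+O(1)=4\log k+O(1)$. Hence $\|u_k\|_{\ln,-1}\to\infty$ and your cancellation breaks down. You must therefore pin down the normalization and constants against \cite{LWY} before either half of the $\mu=-1$ claim can be proved.

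For $\mu\in(-1,0)$ alone you could bypass $\mu=-1$. Combine the G{\aa}rding estimate with the compact embedding $\bH_0^{\ln}(\Omega)\hookrightarrow L^2(\Omega)$ and weak lower semicontinuity. That route still requires positive definiteness of $\|\cdot\|_{\ln,\mu}^2$ on $\bH_0^{\ln}(\Omega)$. For $\mu$ close to $-1$, this fails whenever the constant above is negative.
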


Now we are position to show the proof of Proposition \ref{pr 2-ineq}.

\medskip

 \noindent{\bf Proof of Proposition \ref{pr 2-ineq}.}
 We use the variational min--max characterization of the Dirichlet
eigenvalues. Denote
\begin{align}\label{Euv1}
  \cE_{\ln,\mu,\Omega}(u,u)
    =\int_{\Omega}u(x)(-\Delta)^{\ln}u(x)\,dx
    +2\mu \int_{\Omega} \Big(\log\frac1{|x|}\Big) u(x)^2 \,dx.
\end{align}
By Proposition  \ref{prop:energy-space},  the embedding
\[
\bH_0^{\ln,\mu}(\Omega)\hookrightarrow L^2(\Omega)
\]
is compact for \(\mu>-1\).
Then the eigenvalues admit the min--max characterization
\[
\lambda_k^{\ln,\mu}(\Omega)
=
\min_{\substack{H\subset \bH_0^{\ln,\mu}(\Omega)\\ \dim H=k}}
\max_{u\in H\setminus\{0\}}
\frac{\cE_{\ln,\mu,\Omega}(u,u)}
{\|u\|_{L^2(\Omega)}^2},
\qquad k\in\N,
\]
with eigenvalues counted with multiplicity.

Let
\[
R_\Omega(u)=\frac{\cE_{\ln,\mu,\Omega}(u,u)}{\|u\|_{L^2(\Omega)}^2}.
\]
Then we have that
\[
\lambda_k^{\ln,\mu}(\Omega)
=
\min_{\substack{H\subset \bH_0^{\ln,\mu}(\Omega)\\ \dim H=k}}
\max_{u\in H\setminus\{0\}} R_\Omega(u).
\]

Since \(\Omega_1\subset\Omega_2\), for \(u\in C_c^\infty(\Omega_1)\), we observe that   
\(u\in C_c^\infty(\Omega_2)\) and 
$$
\|u\|_{L^2(\Omega_2)}^2 =\|u\|_{L^2(\Omega_1)}^2,\qquad \ 
\cE_{\ln,\mu,\Omega_2}(u,u)=
\cE_{\ln,\mu,\Omega_1}(u,u).
$$
Then it yields that
\[
R_{\Omega_2}(u)=R_{\Omega_1}(u).
\]
By density of \(C_c^\infty(\Omega_1)\) in \(\bH_0^{\ln,\mu}(\Omega_1)\),  
we note that if \(H\subset \bH_0^{\ln,\mu}(\Omega_1)\) is a  \(k\)-dimensional
subspace, then \( H\subset \bH_0^{\ln,\mu}(\Omega_2)\) is also \(k\)-dimensional and
\[
\max_{u\in H\setminus\{0\}} R_{\Omega_1}(u)
=
\max_{v\in H\setminus\{0\}} R_{\Omega_2}(v).
\]
Using the min--max formula for \(\Omega_2\), we obtain that
\begin{align*}
\lambda_k^{\ln,\mu}(\Omega_2)
 &=
\min_{\substack{W\subset \bH_0^{\ln,\mu}(\Omega_2)\\ \dim W=k}}
\max_{v\in W\setminus\{0\}} R_{\Omega_2}(v)
\\&\le
\min_{\substack{H\subset \bH_0^{\ln,\mu}(\Omega_1)\\ \dim H=k}}\,
\max_{u\in H\setminus\{0\}} R_{\Omega_1}(u)
=\lambda_k^{\ln,\mu}(\Omega_1).
\end{align*}
This completes the proof.\hfill$\Box$\medskip

\subsection{Rearrangement}

\begin{definition}
Let \(N\ge 1\) and \(f:\R^N\to\R\) be a measurable function such that
\[
  d_f(t):=\bigl|\{x\in\R^N: |f(x)|>t\}\bigr|<+\infty
  \qquad\text{for every} \ t>0.
\]
Denote \(\omega_N=|B_1(0)|\). The  symmetric decreasing rearrangement
of \(f\) is the function \(f^\#:\R^N\to[0,\, +\infty]\) defined by
\[
  f^\#(x)= \inf\Bigl\{t\ge0:\ d_f(t)\le \omega_N |x|^N\Bigr\}, \qquad x\in\R^N.
\]
Equivalently, for every \(t\ge0\),
\[
  \{x\in\R^N: f^\#(x)>t\}
  =
  B_{r(t)}(0),
  \qquad
  |B_{r(t)}(0)|=d_f(t),
\]
where \(B_{r(t)}(0)\) is the open ball centered at the origin with the same
measure as the superlevel set \(\{x:|f(x)|>t\}\).
\end{definition}

Observe that  for
\(1\le p\le+\infty\) and $f\in L^p(\R^N)$, it holds that
\[
  \|f^\#\|_{L^p(\R^N)}=\|f\|_{L^p(\R^N)}.
\]

\begin{lemma}\label{lm 2.1}
Let \(N\ge1\),   \(\Omega\subset\R^N\) be a bounded Lipschitz domain containing the origin, 
\(R>0\) satisfy that \(|B_R(0)|=|\Omega|\).  For 
 \(u\in \bH_0^{\ln}(\Omega)\), denote by \(u^\#\) the symmetric decreasing rearrangement.

Then
\[
  u^\#\in \bH_0^{\ln}(B_R(0)).
\]
\end{lemma}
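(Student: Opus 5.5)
We argue in three steps: first show that $\bH_0^{\ln}(\Omega)$ consists of functions vanishing outside $\Omega$; then show that $u^\#$ has finite $\|\cdot\|_{\ln}$-norm and vanishes outside $B_R(0)$; and finally show that such functions lie in the closure of $C_c^\infty(B_R(0))$.

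\emph{Step 1.} Every $u\in \bH_0^{\ln}(\Omega)$ vanishes a.e.\ outside $\Omega$, since this holds for $C_c^\infty(\Omega)$ and convergence in $\|\cdot\|_{\ln}$ implies $L^2$ convergence. So $|\{|u|>t\}|\le|\Omega|=|B_R(0)|$ for every $t>0$. By the definition of $u^\#$, each superlevel set $\{u^\#>t\}$ is a ball centered at the origin of measure at most $|B_R(0)|$. Hence $u^\#=0$ outside $\overline{B_R(0)}$, and $\|u^\#\|_{2}=\|u\|_{2}$.

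\emph{Step 2.} We show the Gagliardo-type part of the norm does not increase under rearrangement. Write the kernel $k(z)=c_N|z|^{-N}\mathds 1_{\{|z|<1\}}$, which is radially symmetric and nonincreasing. Expanding the square,
\[
\int\!\!\!\int \frac{(u(x)-u(y))^2}{|x-y|^N}\mathds 1_{\{|x-y|<1\}}
\]
is formally $2\int u^2\,\int k$ minus twice the cross term. However $\int k=\infty$, so we truncate. Set $k_\epsilon=k\,\mathds 1_{\{|z|>\epsilon\}}$. This kernel is integrable, radially symmetric and nonincreasing (it jumps up at $\epsilon$, so instead use $\min(k,\epsilon^{-N}c_N)\mathds 1_{\{|z|<1\}}$, which is genuinely symmetric decreasing). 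For such a kernel,
\[
\iint (u(x)-u(y))^2k_\epsilon(x-y)=2\|u\|_2^2\|k_\epsilon\|_1-2\iint u(x)u(y)k_\epsilon(x-y).
\]
Riesz's rearrangement inequality gives $\iint |u|(x)|u|(y)k_\epsilon\le \iint u^\#(x)u^\#(y)k_\epsilon$, and $\iint uu k_\epsilon\le\iint|u||u|k_\epsilon$. Combined with equimeasurability, the truncated energy of $u^\#$ is at most that of $u$. Letting $\epsilon\to0$ by monotone convergence yields $\|u^\#\|_{\ln}\le\|u\|_{\ln}<\infty$.

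\emph{Step 3.} This is the main obstacle: identifying $u^\#$ as an element of the closure of $C_c^\infty(B_R(0))$, not merely a finite-energy function supported in $\overline{B_R(0)}$. We plan to use dilation. Let $v_r(x)=u^\#(x/r)$ for $r<1$, which is supported in $\overline{B_{rR}(0)}\subset B_R(0)$. Mollify $v_r$ with $\eta_\delta$ for $\delta<(1-r)R$; this gives functions in $C_c^\infty(B_R(0))$. Mollification converges in $\|\cdot\|_{\ln}$ because the energy is translation invariant and convex: by Jensen, $\|\eta_\delta*w\|\le\|w\|$, and continuity of translations in the Hilbert norm gives convergence. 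Then we need $v_r\to u^\#$ in $\|\cdot\|_{\ln}$ as $r\to1$. Changing variables, the energy of $v_r-u^\#$ involves the kernel with cutoff $|x-y|<1/r$ instead of $1$. We would show $v_r\rightharpoonup u^\#$ weakly with $\limsup\|v_r\|_{\ln}\le\|u^\#\|_{\ln}$ (the cutoff change contributes an $L^2$-controlled term), which gives strong convergence in the Hilbert space. Alternatively, since $B_R(0)$ is Lipschitz, one can cite the standard characterization that $\bH_0^{\ln}(B_R(0))$ equals the set of finite-energy functions vanishing outside $B_R(0)$, as in Chen--Weth. This yields $u^\#\in\bH_0^{\ln}(B_R(0))$.
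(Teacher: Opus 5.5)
Your argument is correct, but it takes a different route from the paper.

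\textbf{The paper's route.} The paper works only with $u\in C_c^\infty(\Omega)$. It invokes the fractional P\'olya--Szeg\H{o} inequality in Fourier variables, subtracts the equal $L^2$ norms, and lets $s\to0^+$ using $|\xi|^{2s}=1+2s\log|\xi|+O(s^2)$. This gives \eqref{eq 2.1-re}, i.e.\ $\cE_{\ln,0,B_R(0)}(u^\#,u^\#)\le\cE_{\ln,0,\Omega}(u,u)$. It then asserts that finite energy yields $u^\#\in\bH_0^{\ln}(B_R(0))$, and passes to general $u$ by density.

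\textbf{Your route.} You stay in physical space: you truncate to the integrable symmetric decreasing kernel $\min\{c_N|z|^{-N},c_N\epsilon^{-N}\}\mathbf 1_{\{|z|<1\}}$, apply Riesz's rearrangement inequality, and conclude by monotone convergence. What this buys:
\begin{enumerate}
\item[(a)] It handles every $u\in\bH_0^{\ln}(\Omega)$ at once. No density-plus-closedness step is needed; the paper leaves that step implicit.
\item[(b)] It avoids the limit $s\to0^+$ for the merely Lipschitz function $u^\#$, which the paper does not justify. One must divide by $s$ and use, e.g., $(|\xi|^{2s}-1)/s\ge2\log|\xi|$.
\item[(c)] Your Step~3 supplies what the paper passes over in one line: a finite-energy function vanishing outside $B_R(0)$ actually lies in the closure of $C_c^\infty(B_R(0))$. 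Your dilation-plus-mollification argument is sound. It uses only that the ball is star-shaped about $0$, and the enlarged cutoff $1\le|z|<1/r$ contributes only $O(\norm{u^\#}_2^2\log\frac1r)$.
\end{enumerate}

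\textbf{What the paper's route buys.} It delivers, in the same stroke, the inequality for the full form $\cE_{\ln,0}$, including the long-range term and $\rho_N$. That is what the proof of Theorem~\ref{teo 1.2}$(i)$ later uses. Your Step~2 controls only the $|x-y|<1$ part of $\norm{\cdot}_{\ln}$. This suffices for the lemma as stated, and your method extends easily. Take $K_\epsilon(z)=c_N\min\{|z|^{-N},\epsilon^{-N}\}$ on all of $\R^N$, which is still symmetric decreasing. For $\epsilon<1$ the $\epsilon$-truncated full form equals $\norm{u}_2^2\bigl(\rho_N+\int_{|z|<1}K_\epsilon\bigr)-\iint u(x)u(y)K_\epsilon(x-y)\,dx\,dy$, and Riesz applies again.
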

\noindent{\bf Proof.} Let \(u\in C_c^{\infty}(\Omega)\),  we have that
\[
  \cE_{\ln,0,\Omega}(u,u)<+\infty.
\]
For \(s\in(0,1)\), the classical P\'olya--Szeg\H{o} inequality for the
fractional Laplacian shows that
\[
  \norm{(-\Delta)^{s/2}u^\#}_{L^2(\R^N)}^2
  \le
  \norm{(-\Delta)^{s/2}u}_{L^2(\R^N)}^2
\]
and then
\[
 \int_{\R^N}|\xi|^{2s}|\widehat{u^\#}(\xi)|^2\,d\xi  \le
 \int_{\R^N}|\xi|^{2s}|\widehat u(\xi)|^2\,d\xi
  <+\infty,
\]
where \[
  \widehat u(\xi)=(2\pi)^{-\frac N2}\int_{\R^N}e^{-{\bf i}  x\cdot\xi}u(x)\,dx.
\]
Thus, we have that
\[
\int_{\R^N}|\xi|^{2s}|\widehat{u^\#}(\xi)|^2\,d\xi- \norm{u^\#}_{L^2(\R^N)}^2
  \le
 \int_{\R^N}|\xi|^{2s}|\widehat u(\xi)|^2\,d\xi-\norm{u}_{L^2(\R^N)}^2,
\]
since \[
 \norm{u^\#}_{L^2(\R^N)}^2= \norm{u^\#}_{L^2(B_R(0))}^2 =
  \norm{u}_{L^2(\Omega)}^2=\norm{u}_{L^2(\R^N)}^2. \]
 By the fact that $$|\xi|^{2s}=1+2s\log|\xi|+O(s^2) \qquad {\rm for} \ \ \xi\not=0$$ and 
passing to the limit as \(s\to0^+\), we obtain
\begin{equation}\label{eq 2.1-re}
  \cE_{\ln,0,B_R(0)}(u^\#,u^\#)\le
  \cE_{\ln,0,\Omega}(u,u) <+\infty.
\end{equation}
Therefore, it holds that
\begin{equation}\label{eq 04}
  u^\#\in \bH_0^{\ln}(B_R(0)).
\end{equation}

Since \(C_c^\infty(\Omega)\) is density in \(\bH_0^{\ln}(\Omega)\)  (see \cite{ChenWeth2019}), 
then (\ref{eq 04}) holds for any  \(u\in \bH_0^{\ln}(\Omega)\)
via taking a sequence functions in \(C_c^\infty(\Omega)\) approaching to 
$u$ in $\bH_0^{\ln}(\Omega)$ if necessary.\hfill$\Box$\medskip

\begin{lemma}\label{lm 2.2}
Assume that \(N\ge 1\), \(w:[0,\infty)\to\mathbb R\) is strictly increasing
 and \(f:[0,\infty)\to[0,\infty)\) is strictly increasing, locally Lipschitz continuous. 
 Let \(u\) be a nonnegative function such that \(u\in L^1(\R^N,w\,dx)\), then 
\begin{equation}\label{inq 2.1}
 \int_{\mathbb R^N} w(|x|)\, f(u^\#(x))\,dx
   \;\le\;
  \int_{\mathbb R^N} w(|x|)\, f(u(x))\,dx,
\end{equation}
furthermore, the equality holds if and only if
\[
  u^\#=u \qquad \text{a.e. in } \mathbb R^N.
\]
\end{lemma}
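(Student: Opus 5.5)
The plan is to reduce the inequality to a comparison on superlevel sets via the layer-cake formula, and then apply a ``bathtub'' argument for a single set. First, since $f$ is strictly increasing, the composition commutes with rearrangement: $f(u^\#)=(f(u))^\#$ up to the constant $f(0)$. Concretely, for $t\ge f(0)$,
\[
\{f(u)>t\}=\{u>f^{-1}(t)\}\quad\text{and}\quad \{f(u^\#)>t\}=\{u^\#>f^{-1}(t)\}=B_{r(f^{-1}(t))}(0),
\]
and the two sets have the same measure. I would first treat the case $f(0)=0$. The general case follows by replacing $f$ with $f-f(0)$, since the contribution $f(0)\int w$ over the relevant supports coincides on both sides. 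In the application $u$ has support of finite measure, contained in $\Omega$ versus $B_R(0)$, so this bookkeeping is harmless. Writing $E_t=\{u>f^{-1}(t)\}$ and $B_t$ for the centered ball with $|B_t|=|E_t|$, Fubini (the layer-cake formula) gives
\[
\int_{\R^N} w(|x|)f(u(x))\,dx=\int_0^\infty\!\!\int_{E_t} w(|x|)\,dx\,dt,\qquad
\int_{\R^N} w(|x|)f(u^\#(x))\,dx=\int_0^\infty\!\!\int_{B_t} w(|x|)\,dx\,dt .
\]
The hypothesis $u\in L^1(w\,dx)$, together with the local Lipschitz bound on $f$, is what I would use to justify Fubini and the finiteness of both sides. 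If $w$ changes sign, I would split $w=w_+-w_-$ and check integrability of each part separately; this is the main technical point to be careful about.

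The key step is the set inequality $\int_{E} w(|x|)\,dx\ge\int_{B} w(|x|)\,dx$ whenever $|E|=|B|<\infty$ and $B=B_r(0)$. Since $|E\setminus B|=|B\setminus E|$ and $w$ is increasing,
\[
\int_E w-\int_B w=\int_{E\setminus B} w(|x|)\,dx-\int_{B\setminus E} w(|x|)\,dx\ \ge\ w(r)\,|E\setminus B|-w(r)\,|B\setminus E|=0 .
\]
This works without any sign condition on $w$. Because $w$ is strictly increasing, the inequality is strict unless $|E\triangle B|=0$: on $E\setminus B$ we have $w(|x|)>w(r)$ a.e., since the sphere has measure zero. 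Integrating this in $t$ yields \eqref{inq 2.1}.

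For the equality case, suppose equality holds in \eqref{inq 2.1}. Then the nonnegative integrand $\int_{E_t}w-\int_{B_t}w$ vanishes for a.e. $t>0$, so $E_t=B_t$ up to null sets for a.e. $t$. Since $f^{-1}$ is a strictly increasing bijection onto its range, this says $\{u>s\}=\{u^\#>s\}$ a.e. for a dense set of levels $s$. I would then recover $u=u^\#$ a.e. from the identity $u(x)=\int_0^\infty \mathds 1_{\{u>s\}}(x)\,ds$, together with monotonicity of the level sets in $s$, which extends the coincidence from a dense set of levels to all levels. The converse direction is trivial. The main obstacle I expect is not the rearrangement argument itself but the integrability bookkeeping: justifying the layer-cake exchange and the reduction to $f(0)=0$ when $w$ may be negative and unbounded near $0$. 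An example is $w(r)=2\mu\log\frac1r$ with $\mu<0$, which is the weight used in the application. To handle this, I would restrict to $u$ supported in a bounded set, as in the application, and use $u\in L^1(w\,dx)$.
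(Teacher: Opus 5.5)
Your proposal is correct and follows the same route as the paper: layer-cake for $f(u)$ using $(f(u))^\#=f(u^\#)$, the comparison of each superlevel set with the centered ball of equal measure, and recovery of $u=u^\#$ from a.e.\ coincidence of superlevel sets. The only differences are that you pass back to levels of $u$ through a dense set of levels where the paper argues $f(u)=f(u^\#)$ and uses injectivity, and that your explicit bathtub computation and your care about $f(0)$ and the sign of $w$ go beyond what the paper writes. One small correction to your integrability remark: $u\in L^1(w\,dx)$ together with local Lipschitz continuity of $f$ does not give $f(u)\in L^1(|w|\,dx)$ (take $w(r)=1+r$, $f(s)=s^2$ and a compactly supported $u\in L^1\setminus L^2$), so finiteness of the right-hand side, which the equality clause genuinely needs, has to be assumed; after that, applying Tonelli separately to $w_+$ and $w_-$ settles the bookkeeping.
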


 \noindent{\bf Proof.} 
When $f(u)=u$, this lemma is a standard consequence of the Hardy-Littlewood rearrangement inequality,
see \cite[Theorem 3.4]{Lieb1} or \cite[Theorem 368]{Hardy1}.  For the general increasing $f$, we provide the detailed  proof.

Set \(v:=f(u)\). Since \(f\) is strictly increasing and locally Lipschitz continuous, the superlevel sets of
\(v\) and \(u\) are related by
\[
  \{x\in\mathbb R^N: v(x)>t\}
  =
  \{x\in\mathbb R^N: u(x)>f^{-1}(t)\}
\]
for \(t\) in the range of \(f\). Hence
\[
  |\{x: v(x)>t\}|
  =
  |\{x: u(x)>f^{-1}(t)\}|.
\]
By definition of the symmetric decreasing rearrangement,
\[
  |\{x: u^\#(x)>s\}|=|\{x: u(x)>s\}|
  \qquad \text{for all } s\ge 0.
\]
Therefore,
\begin{align*}
  |\{x: f(u^\#(x))>t\}|
  &=
  |\{x: u^\#(x)>f^{-1}(t)\}|
  \\[1mm]&\ \, =
  |\{x: u(x)>f^{-1}(t)\}|
 \, =
  |\{x: v(x)>t\}|.
\end{align*}
Since \(f\) is increasing and \(u^\#\) is radially decreasing, the function
\(f(u^\#)\) is also radially decreasing. Thus \(f(u^\#)\) is the symmetric
decreasing rearrangement of \(v=f(u)\), i.e.
\[
  (f(u))^\# = f(u^\#).
\]

Now apply the Hardy--Littlewood rearrangement inequality to \(v=f(u)\).
For every \(t\ge 0\), let \({B_{r(t)}(0)}\subset\mathbb R^N\) be the ball centered at the
origin with
\[
  |{B_{r(t)}(0)}|=|\{x: v(x)>t\}|.
\]
Since \(w\) is radially increasing, among all sets of a fixed measure the
integral of \(w(|x|)\) is minimized by the centered ball. Hence
\[
  \int_{\{v>t\}} w(|x|)\,dx
  \;\ge\;
  \int_{B_{r(t)}(0)} w(|x|)\,dx.
\]
Using the layer-cake representation,
\[
  \int_{\mathbb R^N} w(|x|) f(u(x))\,dx
  =
  \int_{\mathbb R^N} w(|x|) v(x)\,dx
  =
  \int_0^\infty \int_{\{v>t\}} w(|x|)\,dx\,dt.
\]
Therefore,
\[
\begin{aligned}
  \int_{\mathbb R^N} w(|x|) f(u(x))\,dx
  &=
  \int_0^\infty \int_{\{v>t\}} w(|x|)\,dx\,dt \\[1mm]
  &\ge
  \int_0^\infty \int_{B_{r(t)}(0)} w(|x|)\,dx\,dt \\[1mm]
  &=
  \int_0^\infty \int_{\{v^\#>t\}} w(|x|)\,dx\,dt \\[1mm]
  &=
  \int_{\mathbb R^N} w(|x|) v^\#(x)\,dx \\[1mm]
  &=
  \int_{\mathbb R^N} w(|x|) f(u^\#(x))\,dx,
\end{aligned}
\]
which implies the inequality (\ref{inq 2.1}).

It remains to prove the equality case. Suppose equality holds in the above
chain. Then for almost every \(t\ge 0\),
\[
  \int_{\{v>t\}} w(|x|)\,dx
  =
  \int_{B_{r(t)}(0)} w(|x|)\,dx.
\]
Since \(w\) is strictly increasing, the only sets of a given measure for which
this equality can hold are, up to null sets, the centered balls. Hence
\[
  \{v>t\}={B_{r(t)}(0)}
  \qquad \text{up to a null set, for a.e. } t\ge 0.
\]
Thus
\[
  \mathbf 1_{\{v>t\}}(x)=\mathbf 1_{B_{r(t)}(0)}(x)
  \qquad \text{for a.e. } x\in\mathbb R^N,\ \text{for a.e. } t\ge 0.
\]
Integrating in \(t\), we get
\[
  v(x)
  =
  \int_0^\infty \mathbf 1_{\{v>t\}}(x)\,dt
  =
  \int_0^\infty \mathbf 1_{B_{r(t)}(0)}(x)\,dt
  =
  v^\#(x)
\]
for almost every \(x\in\mathbb R^N\). Hence \(v=v^\#\) a.e., i.e.
\[
  f(u)=f(u^\#)\quad\text{a.e. in } \mathbb R^N.
\]
Since \(f\) is strictly increasing, it is injective, and therefore
\[
  u=u^\#\quad\text{a.e. in } \mathbb R^N.
\]
Conversely, if \(u=u^\#\) a.e., then the two integrals are clearly equal.
The proof ends.
\hfill$\Box$
\medskip

\subsection{Fourier multiplier}
This subsection is devoted to show that $\cL_\mu$ is not a Fourier multiplier when $\mu\neq0$.

\begin{proposition}\label{lem:fourier-hardy}
Let $N\geq 1$ and  $u\in\cS(\R^N)$ be real-valued.
Then 
\begin{equation}\label{eq:fourier-log}
 \widehat{(\log\frac1{|\cdot|})}(\xi)
  =
  \theta_N\,\mathrm{Pf}\frac{1}{|\xi|^N}
+
  \alpha_N\,\delta_0,
\end{equation}
where $ \theta_N=2^{\frac N2-1}\Gamma\bigl(\tfrac N2\bigr)>0$,
$\mathrm{Pf}\frac1{|\xi|^N}$ is the finite-part regularization of the
non-integrable function $|\xi|^{-N}$, defined by
\[
  \Bigl\langle \mathrm{Pf}\frac1{|\xi|^N},\,\varphi\Bigr\rangle
  =
  \int_{|\xi|\ge1}\frac{\varphi(\xi)}{|\xi|^N}\,d\xi
  +
  \int_{|\xi|<1}\frac{\varphi(\xi)-\varphi(0)}{|\xi|^N}\,d\xi,
  \qquad\forall\, \varphi\in\cS(\R^N),
\]
and $\alpha_N$ is a constant depending only on $N$ and on the normalization of
the finite part.


Moreover, 
\begin{equation}\label{eq:hardy-fourier-form}
\begin{aligned}
 \int_{\R^N}\big(\log\frac1{|x|}\big)\,u(x)^2\,dx
  =
  \theta_N
    \int_{|\xi|\ge1}\frac{\widehat{u^2}(\xi)}{|\xi|^N}\,d\xi
    +
    \theta_N\int_{|\xi|<1}\frac{\widehat{u^2}(\xi)-\widehat{u^2}(0)}{|\xi|^N}\,d\xi
  +
\alpha_N \|u\|_{2}^2,
\end{aligned}
\end{equation}
where $\widehat{u^2}$ denotes the Fourier transform of the function $u^2$,
\[
  \widehat{u^2}(\xi)
  =
  (2\pi)^{-\frac {N}2}\, \int_{\R^N}e^{-{\bf i}x\cdot\xi}u(x)^2\,dx
  =
 \bigl(\widehat u*\widehat u\bigr)(\xi).
\]

\end{proposition}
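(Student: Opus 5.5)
We compute the Fourier transform of $\log\frac1{|x|}$ as a tempered distribution by differentiating the Riesz potential family in the exponent, and then pair it with $u^2$. For $0<s<N$, the function $|x|^{-s}$ is locally integrable and
\[
\widehat{|\cdot|^{-s}}(\xi)=2^{\frac N2-s}\frac{\Gamma(\frac{N-s}{2})}{\Gamma(\frac s2)}|\xi|^{s-N}
\]
in the normalization $\widehat u(\xi)=(2\pi)^{-N/2}\int e^{-{\bf i}x\cdot\xi}u\,dx$ used in the paper. Since
\[
\log\frac1{|x|}=\frac{d}{ds}\Big|_{s=0}|x|^{-s},
\]
the plan is to differentiate this identity at $s=0^+$ in $\cS'(\R^N)$. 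Both sides are holomorphic in $s$ as $\cS'$-valued maps, after continuation through $s=0$. We therefore test against $\varphi\in\cS$ and expand in $s$.

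\textbf{Step 1: expansion of the coefficient.} Write $A(s)=2^{\frac N2-s}\Gamma(\frac{N-s}2)/\Gamma(\frac s2)$. Since $1/\Gamma(\frac s2)=\frac s2+O(s^2)$, we get
\[
A(s)=2^{\frac N2-1}\Gamma\big(\tfrac N2\big)s+O(s^2)=\theta_N s+O(s^2).
\]

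\textbf{Step 2: expansion of the distribution $|\xi|^{s-N}$.} Split the pairing into the regions $|\xi|\ge1$ and $|\xi|<1$:
\[
\int_{\R^N}|\xi|^{s-N}\varphi\,d\xi=\int_{|\xi|\ge1}|\xi|^{s-N}\varphi\,d\xi+\int_{|\xi|<1}|\xi|^{s-N}\big(\varphi-\varphi(0)\big)\,d\xi+\varphi(0)\frac{|S^{N-1}|}{s}.
\]
This is valid for $0<s<N$. Multiplying by $A(s)$ and letting $s\to0^+$, the first two terms contribute $\theta_N s\langle \mathrm{Pf}|\xi|^{-N},\varphi\rangle+O(s^2)$. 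The pole term gives
\[
A(s)\frac{|S^{N-1}|}{s}\varphi(0)=\big(\theta_N|S^{N-1}|+A'(0)' s+O(s^2)\big)\varphi(0),
\]
where the linear coefficient is $\frac12A''(0)|S^{N-1}|$.

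On the other side, $\langle |x|^{-s},\widehat\varphi\rangle=\langle 1,\widehat\varphi\rangle+s\langle\log\frac1{|x|},\widehat\varphi\rangle+O(s^2)$. The constant terms must match: $\widehat 1=(2\pi)^{N/2}\delta_0$ is consistent with $\theta_N|S^{N-1}|=(2\pi)^{N/2}$. This is a useful sanity check, which I would verify explicitly since it confirms the normalization. Comparing the coefficients of $s$ yields \eqref{eq:fourier-log}, with $\alpha_N=\frac12A''(0)|S^{N-1}|$, a constant depending only on $N$. The $O(s^2)$ remainders are justified by dominated convergence, using $\big||\xi|^{s}-1-s\log|\xi|\big|\le Cs^2\log^2|\xi|\,(|\xi|^{s}+1)$ and the decay of $\varphi$ and $\widehat\varphi$.

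\textbf{Step 3: the quadratic identity.} For real $u\in\cS$, the function $u^2$ lies in $\cS$. By Parseval for tempered distributions,
\[
\int\log\frac1{|x|}\,u^2\,dx=\big\langle \widehat{\log\tfrac1{|\cdot|}},\ \widehat{u^2}(-\,\cdot)\big\rangle .
\]
Because $u^2$ is real, $\widehat{u^2}(-\xi)=\overline{\widehat{u^2}(\xi)}$. Since $\mathrm{Pf}|\xi|^{-N}$ is even and real, this pairing can be written with $\widehat{u^2}(\xi)$; the integrals are real because the imaginary part is odd. Inserting \eqref{eq:fourier-log} gives the two finite-part integrals with factor $\theta_N$, plus $\alpha_N\widehat{u^2}(0)$. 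Finally, $\widehat{u^2}(0)=(2\pi)^{-N/2}\|u\|_2^2$; the factor $(2\pi)^{-N/2}$ is absorbed into the normalization of $\alpha_N$, as the statement allows. The identity $\widehat{u^2}=\widehat u*\widehat u$ also carries a $(2\pi)^{-N/2}$ convention factor, which is again a matter of normalization. The main obstacle is Step 2: rigorously handling the pole at $s=0$ and controlling the expansion uniformly so that the coefficient of $s$ can be identified. I would do this by the explicit splitting above rather than by abstract analytic continuation.

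Finally, the claim that $\cL_\mu$ is not a Fourier multiplier for $\mu\ne0$ follows from the fact that multiplication by $\log\frac1{|x|}$ corresponds to convolution with $\theta_N\mathrm{Pf}|\xi|^{-N}+\alpha_N\delta_0$ on the Fourier side. This convolution is a nonlocal, non-diagonal operator because the kernel is not supported at the origin.
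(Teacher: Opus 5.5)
Your proposal is correct and follows essentially the paper's own route. Both arguments use the Riesz potential formula for $|x|^{-s}$ and the expansion $A(s)=\theta_N s+\tfrac12A''(0)s^2+O(s^3)$ coming from $1/\Gamma(s/2)$. Both then split off the pole $|\mathbb S^{N-1}|\varphi(0)/s$, match constant terms via $\theta_N|\mathbb S^{N-1}|=(2\pi)^{N/2}$ to read off $\alpha_N=\tfrac12A''(0)|\mathbb S^{N-1}|$, and finish with Parseval. Your explicit remainder estimates supply the rigor that the paper leaves implicit.

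Your normalization remark is also right, and it in fact corrects the paper. The paper asserts $\widehat{u^2}(0)=\|u\|_2^2$ and $\widehat{u^2}=\widehat u*\widehat u$, but in the stated convention both carry a factor $(2\pi)^{-N/2}$. So with the same $\alpha_N$ as in \eqref{eq:fourier-log}, the last term of \eqref{eq:hardy-fourier-form} should read $(2\pi)^{-N/2}\alpha_N\|u\|_2^2$. Strictly, the statement does not let you absorb this factor, since it reuses the symbol $\alpha_N$ in both formulas.
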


\noindent{\bf Proof.}
For $0<\lambda<N$, the classical Riesz potential formula (see \cite{GS})
gives
\[
  \mathcal F\bigl[|\cdot|^{-\lambda}\bigr](\xi)
  =
  2^{\frac N2-\lambda}
  \frac{\Gamma\bigl(\frac{N-\lambda}{2}\bigr)}{\Gamma\bigl(\frac\lambda2\bigr)}
  \,|\xi|^{\lambda-N}
  =:
  \theta_\lambda\,|\xi|^{\lambda-N}
\]
with
$$ \theta_\lambda=2^{\frac N2-\lambda}
  \frac{\Gamma\bigl(\frac{N-\lambda}{2}\bigr)}{\Gamma\bigl(\frac\lambda2\bigr)}.$$
Since
$|x|^{-\lambda}=1-\lambda\log|x|+O(\lambda^2)$ as $\lambda\to0^+$, we have that
\[
  \mathcal F\bigl[|\cdot|^{-\lambda}\bigr]
  =
  (2\pi)^{\frac N2}\delta_0-\lambda\,\widehat{\log|\cdot|}+O(\lambda^2).
\]
Since $\Gamma(\lambda/2)=2/\lambda+\Gamma'(1)+O(\lambda)$ as $\lambda\to0^+$, one has
$\theta_\lambda=\lambda \theta_N+\tfrac12\lambda^2 \theta_N''+O(\lambda^3)$, where $\theta_N''=\frac{d}{d\lambda} \theta_\lambda\big|_{\lambda=0}$, while for $\varphi\in\cS(\R^N)$,
\[
  \bigl\langle|\xi|^{\lambda-N},\varphi\bigr\rangle
  =
  \frac{|\mathbb S^{N-1}|}{\lambda}\,\varphi(0)
  +
  \Bigl\langle \mathrm{Pf}\frac1{|\xi|^N},\varphi\Bigr\rangle
  +
  o(1)
  \quad{\rm as}\ \,  \lambda\to0^+.
\]
By the fact that $\theta_N|\mathbb S^{N-1}|=(2\pi)^{\frac N2}$, it yields
\[
  \widehat{(\log\frac1{|\cdot|})}(\xi)
  =
  \theta_N\,\mathrm{Pf}\frac1{|\xi|^N}
  +
 \alpha_N\,\delta_0
  \quad {\rm with}
\ \  \alpha_N=\frac{\theta_N''\,|\mathbb S^{N-1}|}{2},
\]
and then \eqref{eq:fourier-log} holds.

\smallskip

For $u\in\cS(\R^N)$, one has $u^2\in\cS(\R^N)$, and by Parseval's identity, we have that
\[
  \int_{\R^N}(\log\frac1{|x|})\,u(x)^2\,dx
  =
  \Bigl\langle\log\frac1{|\cdot|},\,u^2\Bigr\rangle
  =
  \Bigl\langle
   \widehat{\log\frac1{|\cdot|}},\,\widehat{u^2}
  \Bigr\rangle .
\]
Combining with \eqref{eq:fourier-log} and the fact that
$$\widehat{u^2}=\widehat u*\widehat u,\qquad
\widehat{u^2}(0)=\int_{\R^N}u(x)^2dx=\|u\|_{2}^2,$$
we obtain \eqref{eq:hardy-fourier-form}.
\hfill$\Box$

\section{Faber--Krahn Inequality}\label{sec4}

\subsection{The case of $\mu\in(-1,0]$}

This subsection is devoted to prove the Faber--Krahn inequality for the first
Dirichlet eigenvalue of the logarithmic Hardy operator $ \cL_\mu$
in the regime $\mu\in(-1,0]$.


\medskip

\noindent{\bf Proof of Theorem \ref{teo 1.2}$(i)$.}  When $\mu=0$, the Faber--Krahn inequality
has been established in  \cite{ChenWeth2019} and  the rigidity  has been done  in \cite{CCH26}. So in the sequel, we only
deal with the case $\mu\in(-1,0)$.

Recall that 
\begin{align}\label{eq:energy-decomp01a}
  \cE_{\ln,\mu,\Omega}(u,u)
    =\int_{\Omega}u(x)(-\Delta)^{\ln}u(x)\,dx
    +2\mu \int_{\Omega} \Big(\log\frac1{|x|}\Big) u(x)^2 \,dx,
\end{align}
by \cite[Theorem 1.3]{LWY}, for $\mu>-1$,  we have that the first Dirichlet eigenvalue of 
the logarithmic Hardy operator $ \cL_\mu$ could be given by
\[
  \lambda_1^{\ln,\mu}(\Omega)
  =
  \min\Bigl\{
    \cE_{\ln,\mu,\Omega}(u,u):
    u\in\bH_0^{\ln,\mu}(\Omega),\ \norm{u}_{L^2(\Omega)}=1
  \Bigr\},
\]
and the minimum is attained by a positive eigenfunction
$\phi_1^{\ln,\mu}\in\bH_0^{\ln,\mu}(\Omega)$. By Proposition \ref{prop:energy-space}, we note that 
$\phi_1^{\ln,\mu}\in\bH_0^{\ln}(\Omega)$ when $\mu>-1$.

 \smallskip

Let $R>0$ satisfy that $|B_R(0)|=|\Omega|$, for $u\in\bH_0^{\ln}(\Omega)$,
denote by $u^\#$  the symmetric decreasing rearrangement,
 then  $\norm{u^\#}_{L^2(\R^N)}=\norm u_{L^2(\R^N)}$ and Lemma \ref{lm 2.1} implies that
$u^\#\in\bH_0^{\ln}(B_R(0))$.

Now we claim that
\begin{equation}\label{eq:symmetrization}
  \cE_{\ln,\mu, B_R(0)}(u^\#,u^\#)
  \;\le\;
  \cE_{\ln,\mu,\Omega}(u,u).
\end{equation}
where the equality holds if and only if $u^\#=u$ a.e. in $\R^N$. 
Indeed,  since  $\mu\in(-1,0)$,  then  $2\mu\log\frac1{|x|}$ is radially symmetric and  is strictly increasing about $|x|$
on $(0,\infty)$, then we apply Lemma \ref{lm 2.2} with $f(u)=u^2$ to obtain that
 $$2\mu \int_{B_R(0)}\log\frac1{|x|} u^\#(x)^2\,dx\leq 2\mu \int_{\Omega}\log\frac1{|x|} \,u(x)^2\,dx,  $$
which, combining (\ref{eq 2.1-re}), i.e.
$$\cE_{\ln,0, B_R(0)}(u^\#,u^\#)
  \;\le\;
  \cE_{\ln,0,\Omega}(u,u)$$
  implies that
 \eqref{eq:symmetrization}. \smallskip

 Therefore, we have that
\[
  \cE_{\ln,\mu,\Omega}(u,u)
  \;\ge\;
  \cE_{\ln,\mu, B_R(0)}(u^\#,u^\#)
  \;\ge\;
  \lambda_1^{\ln,\mu}\bigl(B_R(0)\bigr),
\]
since $u^\#$ is  admissible  for the ball $B_R(0)$ with
$\norm{u^\#}_{L^2(B_R(0))}=1$. Taking the infimum over all $u\in\bH_0^{\ln}(\Omega)$
with $\norm u_{L^2(\Omega)}=1$ yields
\[
  \lambda_1^{\ln,\mu}(\Omega)
  \;\ge\;
  \lambda_1^{\ln,\mu}\bigl(B_R(0)\bigr).
\]

\smallskip
 Finally, we consider the equality case.
Assume $\lambda_1^{\ln,\mu}(\Omega)=\lambda_1^{\ln,\mu}(B_R(0))$  and let
$\phi_1:=\phi_1^{\ln,\mu}\in\bH_0^{\ln}(\Omega)$ be the positive first
eigenfunction of $\Omega$, normalized by
$\norm{\phi_1}_{L^2(\Omega)}=1$. Then
\begin{align*}
  \lambda_1^{\ln,\mu}(\Omega)
  =
  \cE_{\ln,\mu,\Omega}(\phi_1,\phi_1)
  &\;\ge\;
  \cE_{\ln,\mu, B_R(0)}(\phi_1^\#,\phi_1^\#)
 \\[1mm]& \;\ge\;
  \lambda_1^{\ln,\mu}\bigl(B_R(0)\bigr)
  =
  \lambda_1^{\ln,\mu}(\Omega),
\end{align*}
hence all inequalities above are equalities.
 In particular, equality
holds in \eqref{eq:symmetrization} holds  only if
$\phi_1=\phi_1^\#$ almost everywhere in $\R^N$.

Conversely, equality
is trivial when $\Omega=B_R(0)$. Thus equality holds in
\eqref{eq:FK-small} if and only if  $\Omega=B_R(0)$..
\hfill$\Box$\medskip

\begin{remark}\label{re1}
Under the assumptions of Theorem~\ref{teo 1.2} $(i)$, the ball
$B_R(0)$ minimizes the first eigenvalue among all domains of the same
volume, i.e.
\[
  \lambda_1^{\ln,\mu}(B_R(0))
  =
  \min\Big\{
    \lambda_1^{\ln,\mu}(\Omega):
    \Omega\subset\R^N \text{ bounded Lipschitz},\ 0\in\Omega,\
    |\Omega|=|B_R(0)|
  \Big\},
\]
for every $V=|\Omega|>0$, and the minimum is attained exactly at the ball
centered at the origin.
\end{remark}

\subsection{The case of $\mu>0$}

\noindent{\bf Proof of Theorem \ref{teo 1.2}$(ii)$.}  
Our aim is to construct a   Lipschitz domain $\Omega\subset\R^N$ containing the origin, with
$|\Omega|=|B_R(0)|=V$   such that for  $\mu>0$,
\[
  \lambda_1^{\ln,\mu}(\Omega)
  <
  \lambda_1^{\ln,\mu}(B_R(0)).
\]

To this end,  we fix $V>0$ and let $R>0$ satisfy $|B_R(0)|=V$. Choose $\varepsilon>0$ and
$r>0$ such that
\[
  |B_\varepsilon(0)|+|B_r(0)|<V .
\]
Let $x_0\in\R^N$ with $|x_0|$ large enough which will be chosen later. Connect the two balls
$B_\varepsilon(0)$ and $B_r(x_0)$ by a thin tube $T$ of volume
$V-|B_\varepsilon(0)|-|B_r(0)|$. Denote
\[
  \Omega=B_\varepsilon(0)\cup T\cup B_r(x_0).
\]
Then $\Omega$ is a bounded Lipschitz domain and satisfies that
 $$0\in\Omega,\quad |\Omega|=V=|B_R(0)|,\quad B_r(x_0)\subset\Omega.$$

Since $B_r(x_0)\subset\Omega$, every function
$u\in\bH_0^{\ln}(B_r(x_0))$,  we have that 
$u\in\bH_0^{\ln}(\Omega)$ and  the Rayleigh quotient for $\Omega$ is no
larger than the Rayleigh quotient for $B_r(x_0)$, and hence
\begin{equation}\label{eq:monotonicity-counter}
  \lambda_1^{\ln,\mu}(\Omega)
  \le
  \lambda_1^{\ln,\mu}(B_r(x_0)).
\end{equation}

We now estimate $\lambda_1^{\ln,\mu}(B_r(x_0))$ as $|x_0|\to+\infty$. Recall
that 
\begin{align}\label{1b0}
  \cE_{\ln,\mu, B_r(x_0)}(u,u)
    =\int_{B_r(x_0)}u(x)(-\Delta)^{\ln}u(x)\,dx
    +2\mu \int_{\R^N} \Big(\log\frac1{|x|}\Big) u(x)^2 \,dx,
\end{align}
For $x\in B_r(x_0)$ and $|x_0|>2r$, we have
\[
  \left|\log\frac1{|x|}-\log\frac1{|x_0|}\right|
  =
  \left|\log\frac{|x_0|}{|x|}\right|
  \le
  \log\frac{|x_0|}{|x_0|-r}
  \le
  \frac{2r}{|x_0|}.
\]
Thus, uniformly on $B_r(x_0)$,
\[
  2\mu\log\frac1{|x|}
  =
  2\mu\log\frac1{|x_0|}
  +
  O\!\left(\frac{\mu r}{|x_0|}\right).
\]
Combining with (\ref{1b0}), it yields that  for every $u\in\bH_0^{\ln}(B_r(x_0))$,
\[
  \cE_{\ln,\mu,B_r(x_0)}(u,u)
  =
  \int_{B_r(x_0)}u(x)(-\Delta)^{\ln}u(x)\,dx
  +
  2\mu\log\frac1{|x_0|}\|u\|_{L^2(\R^N)}^2
  +
  O\!\left(\frac{\mu r}{|x_0|}\right)\|u\|_{L^2(\R^N)}^2.
\]
Dividing by $\|u\|_{L^2(\R^N)}^2$ and taking the infimum over all admissible
$u$, we obtain
\begin{equation}\label{eq:far-ball-asymptotic}
  \lambda_1^{\ln,\mu}(B_r(x_0))
  =
  \lambda_1^{\ln,0}(B_r(x_0))
  +
  2\mu\log\frac1{|x_0|}
  +
  O\!\left(\frac{\mu r}{|x_0|}\right).
\end{equation}
Since $\mu>0$, then $2\mu\log\frac1{|x_0|}\to -\infty$ as
$|x_0|\to+\infty$ and
\[
  \lambda_1^{\ln,\mu}(B_r(x_0))
  \to
  -\infty
  \quad\text{as }\, |x_0|\to+\infty.
\]
Then we could choose $|x_0|$  large enough such that
\[
  \lambda_1^{\ln,\mu}(B_r(x_0))
  <
  \lambda_1^{\ln,\mu}(B_R(0)).
\]
Combining this with \eqref{eq:monotonicity-counter}, we get
\[
  \lambda_1^{\ln,\mu}(\Omega)
  \le
  \lambda_1^{\ln,\mu}(B_r(x_0))
  <
  \lambda_1^{\ln,\mu}(B_R(0)).
\]
But $|\Omega|=V=|B_R(0)|$. Hence the Faber--Krahn inequality is
violated in this case of $\mu>0$. The proof ends.\hfill$\Box$

\bigskip

{\small

\noindent {\bf  Conflicts of interest:}
The authors declare that they have no conflicts of interest regarding this work.

\medskip

\noindent {\bf Data availability:} This paper has no associated data.

\medskip

\noindent{{\bf Acknowledgements:} 
Y. Wang is supported by the National Natural Science Foundation of China (No. 12461041) and
by the Jiangxi Provincial Natural Science Foundation (No. 20252BAC240158).
W. Wang is supported by the National Natural Science Foundation of China (Nos. 12661039,
12301252, 12461043), by the Jiangxi Provincial Natural Science Foundation (No. 20242BAB20003) and
by the Doctoral Research Foundation of Nanchang Hangkong University (No. EA202207184).

}

\end{document}